\newtheorem{thm}{Theorem}[section]
\newtheorem{lemma}[thm]{Lemma}
\theoremstyle{definition}
\newtheorem{remark}[thm]{Remark}
\newcommand{\nc}{\newcommand}
\nc {\hh}{\check{h}}
\nc {\DD}{\mathcal{D}}
\nc {\RR}{\mathcal{R}}
\nc {\Pp}{\mathbb{P}}
\nc {\Ss}{\mathcal{S}}
\nc {\PP}{\mathbb{P}^{2}}
\nc {\Pd}{ \check{\mathbb{P}}^{2}}
\nc {\WW}{\mathcal{W}}
\nc {\Sym}{\mathrm{Sym}}
\nc {\OO}{\mathcal{O}}
\nc {\CC}{\mathbb{C}}
\nc {\EE}{\mathcal{E}}
\nc {\MM}{\mathcal{M}}
\nc {\KK}{\mathcal{K}}
\nc {\PW}{\mathcal{P}}
\nc {\NW}{\mathcal{N}_{\WW}}
\nc {\FF}{\mathcal{F}}
\nc {\GG}{\mathcal{G}}
\nc {\ZZ}{\mathcal{Z}}
\nc {\LL}{\mathcal{L}}
\nc {\HH}{\mathcal{H}}
\nc {\NN}{\mathcal{N}}
\nc {\VV}{\mathcal{V}}
\nc {\Ww}{\mathbb{W}}
\nc {\QQ}{\mathbb{Q}}
\nc {\II}{\mathcal{I}}
\nc {\tang}{\mathrm{Tang}}
\date{\today}
\begin{document}

\title{Automorphisms of projective structures}
\author[M. Falla Luza, F. Loray]
{Maycol Falla Luza$^{1}$, Frank Loray$^2$}
\address{\newline $1$ UFF, Universidad Federal Fluminense, Rua Prof. Marcos Waldemar de Freitas Reis, S/N -- Bloco H, 4o Andar
Campus do Gragoat\'a,
Niter\'oi, RJ, Brazil, Brasil\hfill\break
$2$ Univ Rennes, CNRS, IRMAR, UMR 6625, F-35000 Rennes, France} 
\email{$^1$ hfalla@id.uff.br} \email{$^2$ frank.loray@univ-rennes1.fr}
\subjclass[2000]{32G34, 32J25, 32M25.} \keywords{Automorphism, Projective Structure, Rational Curves}
\thanks{The authors thank Brazilian-French Network in Mathematics and CAPES/COFECUB Project Ma 932/19 ``Feuilletages holomorphes et int\'eractions avec la g\'eom\'etrie''. 
Falla Luza acknowledges support from CNPq (Grant number 402936/2021-3).
Loray is supported by CNRS and  Centre Henri Lebesgue, program ANR-11-LABX-0020-0.}

\begin{abstract}
We study the problem of classifying local projective structures in dimension two having non trivial Lie symmetries. In particular we obtain a classification of flat projective structures having positive dimensional Lie algebra of projective vector fields.
\end{abstract}

\maketitle

\section{Introduction}\label{sec:Automorphismes}

Let $\Pi$ be a projective structure defined on some neighborhood $U$ of $0\in\mathbb C^2$ by
\begin{equation}\label{projective-structure}
y''=f(x,y,y')
\end{equation}
$$\text{with}\ \ \ f(x,y,z)=A(x,y)+B(x,y)z+C(x,y)z^2+D(x,y)z^3$$
The projectivized tangent bundle $M=P(TU)$ is naturally a contact manifold and each solution on $U$ lifts uniquely as a Legendrian curve on $M$ defining a foliation $\mathcal G$ on $M$. If $U$ is a small euclidean ball, the space of $\mathcal{G}$--leaves is a complex surface $U^*$ which contains a rational curve $C_0$ of self-intersection $+1$ given by the image of $\mathbb{P}(T_0 U)$ (equivalently, $C_0$ corresponds to the solutions of \ref{projective-structure} passing through $0\in U$). We call the pair $(U^*, C_0)$ the dual neighborhood of $(U,0)$. For a survey of such duality, we refer to \cite{FL}.

A local diffeomorphism $\Psi$ in $U$ (fixing $0$ or not) is an automorphism of the projective structure $\Pi$,
if it sends geodesics to geodesics of (\ref{projective-structure}). As a consequence, $\Psi$ acts also on the 
dual neighborhood $(U^*,C_0)$, inducing a diffeomorphism $\check{\Psi}$ from the neighborhood of $C_0$
onto the neighborhood of (itself or another) $(+1)$-rational curve $C$ inside $U^*$. Conversely, such a diffeomorphism
$\check{\Psi}$ in $U^*$, between neighborhoods of $(+1)$-rational curves, induces an automorphism $\Psi$
of the projective structure $\Pi$ as above. 
Lie showed that the pseudo-group of automorphisms of the projective structure forms a Lie pseudo-group 
denoted by $\mathrm{Aut}(\Pi)$.
Vector fields whose local flow belong to this pseudo-group are called infinitesimal symmetries 
and form a Lie algebra denoted by $\mathfrak{aut}(\Pi)$. Elements of $\mathfrak{aut}(\Pi)$ 
obviously correspond to germs of holomorphic vector fields on the dual (germ of) neighborhood $(U^*, C_0)$,
and we denote by $\mathfrak{aut}(U^*, C_0)$ the corresponding Lie algebra. Clearly, we have 
$$\mathfrak{aut}(\Pi)\simeq\mathfrak{aut}(U^*, C_0).$$
In \cite{Lie}, Lie gives a classification of the possible infinitesimal symmetry algebras for projective structures, 
showing that they must be isomorphic to one of the following algebras
\begin{equation}\label{LieClassSymProjStr}
\{0\},\ \ \ \mathbb{C},\ \ \ \mathrm{aff}(\mathbb{C}),\ \ \ \mathrm{sl}_2(\mathbb{C})\ \ \ \text{or}\ \ \ \mathrm{sl}_3(\mathbb{C})
\end{equation}
where $\mathrm{aff}(\mathbb{C})$ is the non commutative $2$-dimensional Lie algebra 
corresponding to the affine group acting on the line:
it is spanned by $X$ and $Y$ satisfying $[X,Y]=X$.

In their paper \cite{BMM}, R. Bryant, G. Manno and V. Matveev classified two-dimensional local metrics $(U,g)$
whose underlying projective structure $(U,\Pi_g)$ is such that $\dim \mathfrak{aut}(\Pi_g)=2$. In \cite{BDE} a local obstruction to the existence of a Levi-Civita connection within a given projective structure on a surface is given. More recently, in \cite{Matveev} the case $\dim \mathfrak{aut}(\Pi_g)=1$ is solved. These problems were settled
by Lie himself. As a biproduct, they provide in \cite[section 2.3]{BMM} a list of almost unique normal forms 
for generic local projective structures $(U,\Pi)$ with $\dim \mathfrak{aut}(\Pi)=2$. 
In section \ref{sec:LieSym}, we give a precise statement of this, completed with other possible dimensions $\dim\mathfrak{aut}(\Pi)=1,2,3,8$. We mention here the recent paper \cite{DW} where the authors establish an explicit correspondence between two-dimensional projective structures admitting a projective vector field and a class of solutions to the $SU(\infty)$
 Toda equation. There, our preliminary version of the current work \cite[section 6]{FL0} is used by the authors.

Then we focus on flat projective structures. These are structures whose solutions are given by a pencil of (transverse) foliations or, equivalently whose dual surface $(U^*, C_0)$ admits a semi-local fibration transverse to $C_0$. A non-linearizable projective structure admits at most two flat structures, see \cite[Theorem 5.1]{FL} or \cite[Theorem A]{FL1}. In other words, if $(U,\Pi)$ admits at least three flat structures then there is a local biholomorphism sending solutions of $\Pi$ in solutions of the linear structure $y''=0$. We use this result in order to prove our main result, Theorem \ref{thm:flat+LieSym} where we  classify flat projective structures admiting positive dimensional Lie algebra of symmetries, see Section \ref{sec:SymFlat}.

\subsection*{Organization of the paper} 
In Section $2$ we present some preliminar properties about projective structures with one vector field of symmetries and webs with non trivial automorphism Lie algebra. In Section $3$ we establish a normal form of projective structures having non trivial Lie symmeties, see Theorem \ref{thm:Bryant&al}. Finally in Section $4$ we present our main result, Theorem \ref{thm:flat+LieSym} classifying flat projective structures having Lie symmetries. A preliminary version of this work was posted in \cite[section 6]{FL0}.
\section{Preliminaries}

Let us start with some considerations in the case the projective structure $(U,\Pi)$ is invariant
by one (regular) vector field, say $\partial_y$.

\begin{lemma}\label{lem:XdualSingular}
Let $X=\partial_y$ be a non trivial symmetry of a projective structure $(U,\Pi)$
and let $\check{X}$ be the dual vector field on $(U^*,C_0)$. Then the differential
equation for the projective structure takes the form
\begin{equation}\label{eq:normform1vectfield}
y''=A(x)+B(x)(y')+C(x)(y')^2+D(x)(y')^3
\end{equation}
and we have the following possibilities:
\begin{itemize}
\item $D(0)\not=0$ and $\check{X}$ is regular on $(U^*,C_0)$, with exactly one tangency
with $C_0$;
\item $D(0)=0$ but $D\not\equiv0$ and $\check{X}$ has an isolated singular point on $(U^*,C_0)$;
\item $D\equiv0$ and $\check{X}$ has a curve $\Gamma$ of singular points on $(U^*,C_0)$;
moreover, $\Gamma$ is transversal to $C_0$ and the saturated foliation $\mathcal F_{\check{X}}$
defines a fibration transversal to $C_0$. 
\end{itemize}
\end{lemma}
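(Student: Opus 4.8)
The plan is to put the equation into the stated reduced form by an infinitesimal symmetry computation, then to build two explicit charts on $U^*$ covering $C_0$, write $\check X$ down in them, and finally read off the trichotomy from the resulting formula. For the reduction, note that $X=\partial_y$ has constant coefficients, so its prolongation to the space of $2$-jets is again $\partial_y$ (no $\partial_{y'}$ or $\partial_{y''}$ terms appear). The infinitesimal symmetry condition applied to $y''-f$ then reduces to $\partial_y f=0$, forcing $A,B,C,D$ to depend on $x$ alone and giving \eqref{eq:normform1vectfield}. On the dual side I would coordinatize $U^*$ near $C_0$ by the geodesics that are graphs $y=\varphi(x)$ near $x=0$, using $(a,b)=(\varphi(0),\varphi'(0))$; here $C_0=\{a=0\}$ and the flow $(x,y)\mapsto(x,y+t)$ sends $(a,b)\mapsto(a+t,b)$, so $\check X=\partial_a$. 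This is nonvanishing and transverse to $C_0$, so every zero or tangency of $\check X$ must occur at the vertical geodesics, which this chart misses.

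To reach those, I would swap the roles of $x$ and $y$: writing $x=\psi(y)$ and using $x'=1/y'$, $x''=-y''/(y')^3$ transforms \eqref{eq:normform1vectfield} into the cubic equation $x''=-D(x)-C(x)x'-B(x)(x')^2-A(x)(x')^3$, whose coefficients again depend only on the dependent variable --- this is precisely where the first reduction is used. Coordinatizing these geodesics by $(u,v)=(\psi(0),\psi'(0))$, the curve $C_0$ becomes $\{u=0\}$, the vertical direction through the origin is $v=0$, and computing the infinitesimal generator of $(x,y)\mapsto(x,y+t)$ on the solution space yields
$$\check X=-v\,\partial_u+\bigl(D(u)+C(u)v+B(u)v^2+A(u)v^3\bigr)\partial_v.$$
Since the two charts cover $C_0\cong\mathbb P^1$ and agree on the overlap $v=1/b$, this single formula governs the entire behaviour of $\check X$ along $C_0$.

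The trichotomy is then immediate from setting $u=0$: a zero of $\check X$ on $C_0$ forces $v=0$ and $D(0)=0$. If $D(0)\neq0$, then $\check X$ is nonvanishing along $C_0$ (it equals $D(0)\partial_v\neq0$ at $v=0$ and has nonzero $\partial_u$-component elsewhere), and it is tangent to $C_0=\{u=0\}$ exactly where its $\partial_u$-component $-v$ vanishes, i.e. only at $v=0$, giving the single tangency. If $D(0)=0$ but $D\not\equiv0$, then $(u,v)=(0,0)$ is a zero, and the zero set $\{v=0,\ D(u)=0\}$ is isolated there because the holomorphic function $D$ has isolated zeros. If $D\equiv0$ the field factors as $\check X=v\,\bigl(-\partial_u+(C(u)+B(u)v+A(u)v^2)\partial_v\bigr)$, so its singular locus is the curve $\Gamma=\{v=0\}$, which is transverse to $C_0=\{u=0\}$; the saturated field $W=-\partial_u+(C(u)+B(u)v+A(u)v^2)\partial_v$ is nonvanishing with nonzero $\partial_u$-component, hence transverse to $C_0$, and since $C_0$ is compact and $W$ is everywhere transverse to it, $\mathcal{F}_{\check{X}}$ is a fibration near $C_0$ transverse to $C_0$.

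I expect the genuine work to lie in the middle step: setting up the vertical chart through the $x\leftrightarrow y$ swap and computing $\check X$ correctly, together with the bookkeeping that the two charts cover $C_0$, glue on the overlap, and that saturation produces an honest fibration in a full neighborhood of the compact curve $C_0$. Once the explicit expression for $\check X$ is in hand, the three cases are pure algebra in $(u,v)$.
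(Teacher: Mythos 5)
Your proof is correct, and it takes a genuinely different route from the paper's. The paper never writes $\check X$ in coordinates: it observes that singular points of $\check X$ correspond exactly to $X$-invariant geodesics, i.e.\ to vertical lines $x=c$ that are geodesics, which happens precisely when $D(c)=0$ --- this gives the trichotomy in $D$ at once; the single-tangency claim then follows from a degree count (a nonvanishing $\check X$ cannot be everywhere tangent to $C_0$, since a section of $TC_0\simeq\mathcal O_{\mathbb P^1}(2)$ would have zeros, so it induces a nonzero section of $NC_0\simeq\mathcal O_{\mathbb P^1}(1)$, which has exactly one zero); and in the case $D\equiv 0$ the transversal curve $\Gamma$ comes from the duality between geodesic foliations and cross-sections of $C_0$, with transversality of $\mathcal F_{\check X}$ obtained by dividing $\check X$ by a reduced equation of $\Gamma$. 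You instead build two explicit leaf-space charts of $U^*$ (initial conditions on $\{x=0\}$ for graphs over $x$, initial conditions on $\{y=0\}$ for the swapped equation) and compute $\check X$ in closed form; your expression $\check X=-v\,\partial_u+\bigl(D(u)+C(u)v+B(u)v^2+A(u)v^3\bigr)\partial_v$ is correct, and the three cases do follow from it by inspection, including the simple zero of the $\partial_u$-component $-v$ along $C_0$ (exactly one tangency) and the factorization $\check X=v\,W$ when $D\equiv0$. Your route is more elementary and self-contained --- it bypasses the duality dictionary and the bundle identifications --- and it yields sharper information (the tangency, the isolated singularity, and the curve $\Gamma$ all sit over the vertical direction at the origin). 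What the paper's argument buys in exchange is brevity and conceptual generality: the degree count shows that \emph{any} nonvanishing vector field near a $(+1)$-curve has exactly one tangency with it, with no coordinates at all, and the dictionary ``zeros of $\check X$ $\leftrightarrow$ $X$-invariant geodesics'' makes the role of $D$ transparent. Two minor remarks: the transition between your charts is $v=1/b$ only along $C_0$ itself (off $C_0$ it involves the holonomy of the geodesic foliation), but this is harmless since $\check X$ is one globally defined field computed in two charts; and both proofs ultimately rest on the same standard fact --- which you at least invoke explicitly via compactness of $C_0$ --- that a regular holomorphic foliation everywhere transverse to the compact curve $C_0$ defines a fibration on a germ of neighborhood.
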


\begin{proof}The normal form (\ref{eq:normform1vectfield}) follows from a straithforward computation.
Clearly, $\check{X}$ has a singular point at $p\in U^*$  if, and only if, the corresponding geodesic in $U$ 
is a trajectory of $X$ (i.e. is $X$-invariant). Therefore, up to shrinking the neighborhoods $U$ and $U^*$,
we have $3$ possibilities: 
\begin{itemize}
\item $D\not=0$ on $U$ and $\check{X}$ is regular on $(U^*,C_0)$;
\item $D$ vanishes exactly along $x=0$ which is therefore geodesic, 
and $\check{X}$ has an isolated singular point at the corresponding point on $(U^*,C_0)$;
\item $D\equiv0$, the foliation $dx=0$ defined by $X$ is geodesic, 
and $\check{X}$ has a curve $\Gamma$ of singular points on $(U^*,C_0)$. 
\end{itemize}
In the first case, the restriction $\check{X}\vert_{C_0}$ cannot be identically tangent 
to $TC_0\simeq\mathcal O_{\mathbb P^1}(2)$,
otherwise it would have a singular point; it thus defines a non trivial section of $NC_0\simeq\mathcal O_{\mathbb P^1}(1)$ 
which must have a single zero, meaning that $\check{X}$ has a single tangency with $C_0$. The second case we are not interested, since moving to a nearby point of $U$, we can assume that we are 
in the first case. 

In the third case, each trajectory of $X$ is geodesic, so is the induced foliation $dx=0$.
By duality, this foliation defines a cross section to $C_0$ consisting of singular points of $\check{X}$.
The saturated foliation $\mathcal F_{\check{X}}$ is locally defined by $\check{X}$ oustide $\Gamma$, 
and by the vector field $\frac{1}{f}\check{X}$ near $\Gamma$,  where $f$ is a reduced equation of $\Gamma$.
But $\frac{1}{f}\check{X}$ induces a non zero section of $NC_0$ (near $\Gamma$) since it must be less vanishing;
$\frac{1}{f}\check{X}$ is therefore transversal to $C_0$, and so is $\mathcal F_{\check{X}}$.
\end{proof}

\begin{lemma}[Cartan {\cite[p.78-83]{CartanTissus}}]\label{lem:CartanSymTissus}
Let $\WW=\FF_0 \boxtimes \FF_1 \boxtimes \FF_{\infty}$ be a regular $3$-web on $(\CC,0)$,
and let $\mathfrak{aut}(\WW)$ be the Lie algebra of vector fields whose flow preserve $\WW$.
If $\mathfrak{aut}(\WW)\not=0$ then we are in one of the two cases, up to change of coordinates:
\begin{itemize}
\item $\mathfrak{aut}(\WW)=\CC\partial_y$ and $\WW=dy\boxtimes (dy-dx)\boxtimes (dy+f(x)(dy-dx))$, 
with $f$ analytic, not of the form $f(x)=ae^{bx}$, $a,b\in\CC$;
\item $\mathfrak{aut}(\WW)=\CC\langle\partial_x,\partial_y,x\partial_x+y\partial_y\rangle$ 
and $\WW=dy\boxtimes (dy-dx)\boxtimes dx$.
\end{itemize}
\end{lemma}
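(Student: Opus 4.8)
The plan is to reduce everything to the situation where $\WW$ carries a symmetry that does not vanish at $0$, and then to treat that situation by a direct normalization followed by an explicit computation of $\mathfrak{aut}(\WW)$. Since the flow of any $X\in\mathfrak{aut}(\WW)$ is connected to the identity and can only permute the three foliations, it must fix each $\FF_i$; hence every symmetry preserves each foliation individually. Suppose some nonzero $X$ vanishes at $0$. Its linear part $DX_0$ then preserves the three distinct tangent lines $T_0\FF_0,T_0\FF_1,T_0\FF_\infty$, and a linear endomorphism of $\CC^2$ fixing three distinct lines is scalar, so $DX_0=\mu\,\mathrm{Id}$. I would rule out $\mu=0$ as follows: straightening two of the foliations to the axes $\{dy=0\}$ and $\{dx=0\}$ puts $X$ in separated form $X=a(x)\partial_x+b(y)\partial_y$ with $a=O(x^2)$, $b=O(y^2)$, and invariance of the third foliation $\ker(dy-p\,dx)$ (with $p(0)\neq0$) reads as the linear PDE $a\,p_x+b\,p_y=p\,(b'-a')$; comparing lowest-order terms produces a nonzero homogeneous part of degree $m-1$ on the right (where $m\ge 2$ is the order of $X$) with no counterpart of such low degree on the left, a contradiction, while the degenerate subcases $a\equiv0$ or $b\equiv0$ force $p$ to blow up at the origin. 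Thus $\mu\neq0$.

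When $\mu\neq0$ there are no resonances for the eigenvalue pair $(\mu,\mu)$, so $X$ linearizes and $\WW$ becomes invariant under the Euler (radial) field $x\partial_x+y\partial_y$, i.e. scaling-invariant. I would then observe that a holomorphic foliation which is homogeneous and regular at $0$ is necessarily a pencil of parallel lines: writing a defining form as $a\,dx+b\,dy$ and expanding into homogeneous components, invariance forces $a_k b_0=b_k a_0$ for all $k$, and regularity (say $b_0\neq0$, $a_0=0$ after choosing the tangent line as an axis) forces $a\equiv0$. Hence $\WW$ is the superposition of three parallel pencils; this web is flat and carries the regular translations $\partial_x,\partial_y$. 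In every case $\mathfrak{aut}(\WW)$ therefore contains a field regular at $0$, and after a change of coordinates we may assume $\partial_y\in\mathfrak{aut}(\WW)$.

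\textbf{Normalization.} A $\partial_y$-invariant regular foliation has slope depending only on $x$ (or is vertical), so the three slopes are pairwise distinct functions $p_0(x),p_1(x),p_\infty(x)$. Using the residual gauge $\tilde x=\phi(x)$, $\tilde y=y+\eta(x)$, which preserves $\partial_y$ and acts on slopes by an $x$-dependent affine map, I can send two of them to $0$ and $1$, normalizing $\FF_0=\{dy=0\}$ and $\FF_1=\{d(y-x)=0\}$; the third then takes the form $dy+f(x)(dy-dx)$ with $f$ analytic and nonvanishing (the vertical case corresponding to $f\equiv-1$). This yields the stated shape of $\WW$, with $\partial_y$ a manifest symmetry.

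\textbf{Computation and dichotomy.} For $\WW=dy\boxtimes(dy-dx)\boxtimes(dy+f(x)(dy-dx))$ I would compute $\mathfrak{aut}(\WW)$ by writing $X=a\partial_x+b\partial_y$ and imposing invariance of the three foliations; this gives $b=b(y)$, $a_x+a_y=b'(y)$, and $a_y=(\log f)'(x)\,a$. Solving the last relation as an ODE in $y$ gives $a=C(x)\,e^{(\log f)'(x)\,y}$, and feeding this into $a_x+a_y=b'(y)$ forces, through the coefficient of $y^2$, the identity $\big((\log f)'\big)^2C\equiv0$. Thus if $f$ is not of the form $ae^{bx}$ --- equivalently $(\log f)''\not\equiv0$, which by the Chern--Blaschke curvature $K=(\log f)''\,dx\wedge dy$ is exactly the non-flat case --- then $C\equiv0$, $a\equiv0$, and $\mathfrak{aut}(\WW)=\CC\partial_y$; while the excluded forms $f=ae^{bx}$ are precisely the flat webs, where the standard linearization of hexagonal webs carries $\WW$ to the model $dy\boxtimes(dy-dx)\boxtimes dx$ with $\mathfrak{aut}(\WW)=\CC\langle\partial_x,\partial_y,x\partial_x+y\partial_y\rangle$. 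I expect the reduction step to be the main obstacle, since it requires separately excluding the tangent-to-identity case and recognizing scaling-invariant regular webs as flat; by contrast the final computation is essentially bookkeeping, its only noteworthy feature being that the symmetry dimension jumps directly from $1$ to $3$, so that no two-dimensional algebra arises.
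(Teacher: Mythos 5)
The paper does not actually prove this lemma: it is quoted directly from Cartan's memoir \cite[pp.~78--83]{CartanTissus} and used as a black box, so your self-contained argument is by necessity a different route, and structurally it is a sound one. The reduction to a symmetry regular at the origin checks out: connectedness of the flow forces each $\FF_i$ to be preserved individually; a linear map fixing three distinct lines is scalar, so $DX_0=\mu\,\mathrm{Id}$; the case $\mu=0$ is correctly excluded via the separated form $a(x)\partial_x+b(y)\partial_y$ and comparison of lowest-order terms in $a\,p_x+b\,p_y=p(b'-a')$; Poincar\'e linearization applies since the eigenvalue pair $(\mu,\mu)$, $\mu\neq0$, is non-resonant; and the homogeneous-component argument showing that a scaling-invariant regular foliation is a pencil of parallel lines is correct. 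The normalization by the residual gauge $(x,y)\mapsto(\phi(x),y+\eta(x))$ and the three invariance equations $b=b(y)$, $a_x+a_y=b'(y)$, $a_y=(\log f)'(x)\,a$ are also right.

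However, the key identity on which your final dichotomy rests is misstated, and as written it is false. Setting $u=(\log f)'$ and $a=C(x)e^{u(x)y}$, substitution into $a_x+a_y=b'(y)$ gives
\begin{equation*}
\bigl(C'+Cu+Cu'y\bigr)e^{u(x)y}=b'(y),
\end{equation*}
and since the right-hand side is independent of $x$, differentiating in $x$ and extracting the coefficient of $y^2$ yields
\begin{equation*}
C\,(u')^2\equiv0,\qquad\text{i.e.}\qquad C\cdot\bigl((\log f)''\bigr)^2\equiv0,
\end{equation*}
with the \emph{second} derivative of $\log f$, not the first. Your identity $\bigl((\log f)'\bigr)^2C\equiv0$ cannot hold: for $f(x)=e^x$ (so $u\equiv1$) the vector field $e^{y-x}\partial_x$ satisfies all three invariance equations with $C(x)=e^{-x}\not\equiv0$, so $u^2C\not\equiv0$. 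Indeed, your version would force $C\equiv0$ for every exponential $f=ae^{bx}$ with $b\neq0$, making those symmetry algebras one-dimensional --- contradicting your own (correct) observation that these webs are flat and hence carry the three-dimensional algebra. With the corrected identity the dichotomy comes out exactly as you state it: if some symmetry has $C\not\equiv0$, then $(\log f)''\equiv0$, i.e. $f=ae^{bx}$ and the web is hexagonal; otherwise every symmetry has $a\equiv0$, hence $b'=0$, and $\mathfrak{aut}(\WW)=\CC\partial_y$. So the slip is local and entirely reparable, but since it sits at the crux of the case distinction it must be fixed.
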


\begin{lemma}\label{lem:XgeodFlat}
Under assumptions and notations of Lemma \ref{lem:XdualSingular}, 
assume that we are in the last case $D\equiv0$.
If the singular set $\Gamma$ of $\check{X}$ is a fiber of the fibration defined by $\mathcal F_{\check{X}}$ on $(U^*,C_0)$,
then $(U,\Pi)$ is linearizable. 
\end{lemma}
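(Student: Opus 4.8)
The plan is to translate the geometric hypothesis on $\Gamma$ into an explicit condition on the coefficients of (\ref{eq:normform1vectfield}), and then to linearize the equation directly. First I would describe the fibration $\mathcal{F}_{\check{X}}$ concretely on the source $U$. Since $X=\partial_y$ is a symmetry and $\check{X}$ generates the induced flow on $U^*$, the leaves of $\mathcal{F}_{\check{X}}$ are the $X$-orbits of geodesics, that is, the $X$-invariant geodesic foliations on $U$. Because $X=\partial_y$, such a foliation is invariant under $y\mapsto y+t$, hence is of the form $\{y'=h(x)\}$ (including the degenerate vertical case $h\equiv\infty$, i.e. $dx=0$); and the requirement that its leaves $y=\int h\,dx+\mathrm{const}$ be geodesics of (\ref{eq:normform1vectfield}), where $D\equiv 0$, is exactly that $h$ solve the Riccati equation $h'=A(x)+B(x)h+C(x)h^2$. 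Thus the base of the fibration $\mathcal{F}_{\check{X}}$ is the $\mathbb{P}^1$ parametrizing Riccati solutions. On the other hand, by the proof of Lemma \ref{lem:XdualSingular}, $\Gamma$ is the locus of $X$-invariant geodesics; since these are precisely the vertical lines $\{x=c\}$, the curve $\Gamma$ corresponds to the single foliation $dx=0$, that is, to the value $h\equiv\infty$.

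Next I would identify when this value is an honest leaf. Setting $w=1/h$, the Riccati equation becomes $w'=-(C(x)+B(x)w+A(x)w^2)$, so the locus $\{w=0\}$ (equivalently $h\equiv\infty$, the foliation $dx=0$) is a solution if and only if $C\equiv 0$; if $C\not\equiv 0$ then near a generic $x$ the leaves of $\mathcal{F}_{\check{X}}$ cross $\Gamma$ transversally and $\Gamma$ is a section of the fibration rather than a fiber. Hence the hypothesis that $\Gamma$ be a fiber of $\mathcal{F}_{\check{X}}$ is equivalent to $C\equiv 0$. I expect this translation to be the main step: one must handle carefully the behaviour at $y'=\infty$ and verify that ``$dx=0$ is a fiber'' (and not merely transverse to the fibration) is exactly the invariance of $\{w=0\}$ under the Riccati flow, which is where the duality between $\Gamma$ and the degenerate foliation has to be used cleanly.

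Finally, with $C\equiv D\equiv 0$ the equation (\ref{eq:normform1vectfield}) reads $y''=A(x)+B(x)y'$, which is linear. Choosing a particular solution $y_p$ and setting $\psi(x)=\int_0^x e^{\int_0^t B(s)\,ds}\,dt$, so that $\psi'(0)=1$, its general solution is $y=y_p(x)+c_1\psi(x)+c_2$. The map $(x,y)\mapsto(\psi(x),\,y-y_p(x))$ has Jacobian determinant $\psi'(x)\neq 0$, hence is a local biholomorphism near $0$, and it carries the solutions to the straight lines $\tilde y=c_1\tilde x+c_2$, i.e. to solutions of $\tilde y''=0$; therefore $(U,\Pi)$ is linearizable. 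Alternatively one could invoke \cite[Theorem 5.1]{FL}: the linear structure admits infinitely many flat structures, far more than two, which forces linearizability as well — but the explicit change of variables above is the most direct route.
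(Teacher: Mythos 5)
Your proof is correct, but it takes a genuinely different route from the paper's. The paper argues with webs: it takes three fibers of $\mathcal F_{\check{X}}$ distinct from $\Gamma$, which form an $X$-invariant $3$-web of geodesic foliations, normalizes this web by Cartan's Lemma \ref{lem:CartanSymTissus} to $dy\boxtimes(dy-dx)\boxtimes(dy+f(x)(dy-dx))$, so that the flat structure carried by $\mathcal F_{\check{X}}$ is the pencil $dy+zf(x)(dy-dx)=0$; the hypothesis that $\Gamma$ is a fiber says exactly that the foliation $dx=0$ of $X$-orbits belongs to this pencil, which forces the pencil to be that of the hexagonal web $dy\boxtimes(dy-dx)\boxtimes dx$, hence $\Pi$ is linearizable. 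You instead translate the hypothesis into the coefficients of (\ref{eq:normform1vectfield}): fibers of $\mathcal F_{\check{X}}$ away from $\Gamma$ are the $X$-invariant geodesic foliations $y'=h(x)$ with $h'=A+Bh+Ch^2$, the hypothesis becomes $C\equiv 0$, and $y''=A(x)+B(x)y'$ is then linearized by an explicit change of variables. The step you flag as delicate (that ``$\Gamma$ is a fiber'' is exactly invariance of $\{w=0\}$ under the Riccati flow) is indeed the crux, but it completes routinely: taking as coordinates on $U^*$ near $\Gamma$ the initial conditions $(c,w)=\bigl(x(0),\tfrac{dx}{dy}(0)\bigr)$ for the equation written with $x$ as a function of $y$, namely $\tfrac{d^2x}{dy^2}=-\tfrac{dx}{dy}\bigl(C+B\tfrac{dx}{dy}+A(\tfrac{dx}{dy})^2\bigr)$ when $D\equiv 0$, one finds
$$\check{X}=-w\,\Bigl[\partial_c-\bigl(C(c)+B(c)w+A(c)w^2\bigr)\partial_w\Bigr],$$
so the saturated foliation $\mathcal F_{\check{X}}$ is precisely your $w$-Riccati foliation, $\Gamma=\{w=0\}$, and $\Gamma$ is a leaf if and only if $C\equiv 0$. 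As for what each approach buys: yours is elementary and self-contained (no web machinery) and produces explicit linearizing coordinates, while the paper's is shorter granted Lemma \ref{lem:CartanSymTissus} and is designed for reuse --- the same web argument is re-invoked in the proof of Lemma \ref{lem:XflatNonLin}, where a common fiber of two fibrations plays the role of $\Gamma$, a situation your coefficient computation does not immediately cover. One caution: your closing alternative via \cite[Theorem 5.1]{FL} is circular as stated (one would need to exhibit three distinct flat structures on $\Pi$ before knowing it is linear), but since it is offered only as an aside, the main argument is unaffected.
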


\begin{proof}Take $3$ different fibers of $\mathcal F_{\check{X}}$ different from $\Gamma$, 
they define a $3$-web $\WW=\FF_0 \boxtimes \FF_1 \boxtimes \FF_{\infty}$ which is invariant by $X$.
By Cartan's Lemma \ref{lem:CartanSymTissus}, we can assume $X=\partial_y$ and 
$\WW=dy\boxtimes (dy-dx)\boxtimes (dy+f(x)(dy-dx))$ so that the flat structure of $\Pi$
is defined by the pencil $dy+zf(x)(dy-dx)=0$. But the foliation $dx=0$ defined by $X$ is, 
by assumption, belonging to the pencil, which means that $\Pi$ is also defined by the 
hexagonal $3$-web $dy\boxtimes (dy-dx)\boxtimes dx$, thus linearizable.
\end{proof}

\section{Classification of projective structures with Lie symmetries}\label{sec:LieSym}

The problem of this section is to classify those local projective structures $(U,\Pi)$
having non trivial Lie symmetry, i.e. such that $\dim \mathfrak{aut}(\Pi)>0$, up to change of coordinates.
However, in this full generality, the problem is out of reach; indeed, it includes for instance the problem
of classification of germs of holomorphic vector fields (with arbitrary complicated singular points),
which is still challenging. Instead of this, and in the spirit of Lie's work, we produce a list of possible normal
forms up to change of coordinates for such a $(U,\Pi)$ at a generic point $p\in U$,
i.e. outside a closed analytic subset consisting of singular features. For instance, a non trivial vector field
is regular at a generic point and can be rectified to $\partial_y$; we only consider this constant vector field
in the case $\dim\mathfrak{aut}(\Pi)=1$. The following resumes some results of \cite[section 2.3]{BMM}.

\begin{thm}\label{thm:Bryant&al}
Let $(U,\Pi)$ be a projective structure with $\mathfrak{aut}(\Pi)\not=\{0\}$. Then, at the neighborhood
of a generic point $p\in U$, the pair $(\Pi,\mathfrak{aut}(\Pi))$ can be reduced by local change of coordinate 
to one of the following normal forms:
\begin{itemize}
\item[(i)] $\mathfrak{aut}(\Pi)=\CC\cdot\partial_y$ and $(A,B,C,D)=$
\begin{itemize}
\item[(i.a)] $(A(x),B(x),0,1)$ with $A,B\in\CC\{x\}$;
\item[(i.b)] $(A(x),0,e^{x},0)$ with $A\in\CC\{x\}$;
\end{itemize}
\item[(ii)] $\mathfrak{aut}(\Pi)=\CC\langle\partial_y,\partial_x+y\partial_y\rangle$ and $(A,B,C,D)=$
\begin{itemize}
\item[(ii.a)] $(\alpha e^{x},\beta,0,e^{-2x})$ with $\alpha,\beta\in\CC$, $(\alpha,\beta)\not=(0,2),(0,\frac{1}{2})$;
\item[(ii.b)] $(\alpha e^x,0,e^{-x},0)$ with $\alpha\in\CC$;
\end{itemize}
\item[(iii)] $\mathfrak{aut}(\Pi)=\CC\langle\partial_y,\partial_x+y\partial_y,y\partial_x+\frac{y^2}{2}\partial_y\rangle$
and $(A,B,C,D)=(0,\frac{1}{2},0,e^{-2x})$;
\item[(iv)] $\mathfrak{aut}(\Pi)=\mathrm{sl}_3(\mathbb{C})$ and $(A,B,C,D)=(0,0,0,0)$.
\end{itemize}
These normal forms are unique, except for the case {\rm(i.a)}, which is unique up to the $\mathbb{C}^*$-action:
$$(A(x), B(x), 0, 1)\stackrel{\lambda\in\mathbb C^*}{\longrightarrow}(\lambda^3A(\lambda^2 x), \lambda^2 B(\lambda^2 x), 0, 1).$$
\end{thm}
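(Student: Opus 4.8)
The plan is to run through the finite list of abstract symmetry algebras allowed by Lie's classification \eqref{LieClassSymProjStr}, and for each one to produce a concrete realization by vector fields near a generic point, solve the resulting linear invariance (determining) equations for the coefficients $(A,B,C,D)$, and finally exploit the residual normalizer of the realized algebra inside the point-transformation pseudo-group to reduce the free constants to the stated forms. Since $\mathfrak{aut}(\Pi)\neq\{0\}$, the cases to treat have dimension $1,2,3,8$. The case $\dim=8$, i.e.\ $\mathfrak{aut}(\Pi)=\mathrm{sl}_3(\mathbb C)$, is the classical maximal-symmetry situation and forces the flat model $(A,B,C,D)=(0,0,0,0)$, giving (iv).

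For $\dim=1$ I would rectify the generically regular generator to $X=\partial_y$, so that Lemma \ref{lem:XdualSingular} applies and the equation is already in the form \eqref{eq:normform1vectfield} with coefficients depending only on $x$. The trichotomy of that lemma organizes the discussion. When $D\not\equiv0$, at a generic point $D\neq0$, and the residual freedom $(x,y)\mapsto(\phi(x),cy+\eta(x))$ normalizing $\mathbb C\partial_y$ can be used to set $D\equiv1$ and $C\equiv0$; tracking what survives of this freedom once $D=1,C=0$ are imposed leaves exactly the scaling $(x,y)\mapsto(\lambda^{-2}x,\lambda^{-1}y)$, which acts on $(A,B)$ by the displayed $\mathbb C^*$-action, yielding (i.a). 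When $D\equiv0$ the structure is flat, and I would normalize $B\equiv0$ and $C\equiv e^{x}$ using the same residual group, obtaining (i.b); here Cartan's Lemma \ref{lem:CartanSymTissus} controls the situation, its excluded family $f(x)=ae^{bx}$ being precisely where the symmetry would enlarge, while Lemma \ref{lem:XgeodFlat} excludes the linearizable degeneration from the genuinely one-dimensional stratum.

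For $\dim=2$ the algebra is $\mathrm{aff}(\mathbb C)$, as the list \eqref{LieClassSymProjStr} contains no abelian two-dimensional algebra, so I would first rectify $X=\partial_y$ and then solve $[\partial_y,Y]=\partial_y$, using a reparametrization of $x$ and a shift of $y$ to bring the second generator to $Y=\partial_x+y\partial_y$. Imposing invariance of the equation under the flow $(x,y)\mapsto(x+t,e^{t}y)$ of $Y$ forces $(A,B,C,D)=(\alpha e^{x},\beta,\gamma e^{-x},\delta e^{-2x})$. Splitting according to whether $D\not\equiv0$ or $D\equiv0$ and using the normalizer of $\langle\partial_y,\partial_x+y\partial_y\rangle$ — which contains the scalings $y\mapsto\lambda y$ and the shears $y\mapsto y+Ke^{x}$ — I would kill $C$ and normalize $D$ to obtain (ii.a), respectively normalize $C\equiv e^{-x}$ and kill $B$ to obtain (ii.b). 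For $\dim=3$, i.e.\ $\mathrm{sl}_2(\mathbb C)$, the same strategy extends the realization to $\langle\partial_y,\partial_x+y\partial_y,y\partial_x+\tfrac{y^2}{2}\partial_y\rangle$, and invariance under the extra generator rigidifies the constants to $(0,\tfrac12,0,e^{-2x})$, giving (iii).

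The main obstacle is the exactness of the dimension count, i.e.\ proving that each listed form has precisely the stated algebra and not a larger one. Concretely, this requires identifying the exceptional parameter loci where an extra projective vector field appears: for (ii.a) these are exactly the excluded values $(\alpha,\beta)=(0,2),(0,\tfrac12)$, and the coherence of the whole picture is confirmed by the fact that $(0,\tfrac12)$ reproduces case (iii), so that the boundary of the two-dimensional stratum is the three-dimensional one. The remaining work is the bookkeeping of residual normalizers that yields the uniqueness statements — in particular, showing that fixing $D=1$ and $C=0$ consumes all continuous freedom in (ii.a) while leaving a one-parameter scaling in (i.a), which is the source of the $\mathbb C^*$-action — together with matching the flat one-dimensional stratum with Cartan's web normal forms of Lemma \ref{lem:CartanSymTissus}. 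Throughout, these verifications reduce to solving the linear determining equations and comparing finite-dimensional gauge groups, following \cite[section 2.3]{BMM} and \cite{Lie}.
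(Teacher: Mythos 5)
Your skeleton --- run through Lie's list \eqref{LieClassSymProjStr}, rectify generators at a generic point, solve the determining equations, and quotient by the residual normalizer --- is exactly the paper's strategy (following \cite[section 2.3]{BMM}, with their Lemmas 1 and 4 invoked for the two- and three-dimensional cases). The genuine gap is in how you exclude the linearizable degenerations, which is precisely the step that makes the normalizations legitimate. In case (i) with $D\equiv0$, setting $C\equiv e^x$ requires $C$ to be non-constant at a generic point, since the residual changes act by $\widehat C=a\,C\circ\psi$ and one must invert $C$. The paper rules out constant $C$ by a direct computation of Liouville's invariants \eqref{eq:Liouville}: for $(A(x),B(x),C_0,0)$ with $C_0$ constant one finds $L_1=L_2=0$, hence $\Pi$ is linearizable and $\mathfrak{aut}(\Pi)=\mathrm{sl}_3(\CC)$, contradicting $\dim\mathfrak{aut}(\Pi)=1$. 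Your cited tools do not cover this case: the hypothesis of Lemma \ref{lem:XgeodFlat} (the singular curve $\Gamma$ of $\check X$ is a fiber of $\mathcal F_{\check X}$) translates, for coefficients $(A,B,C,0)$ depending on $x$, into the vertical foliation $dx=0$ belonging to the pencil, which happens exactly when $C\equiv0$ --- it says nothing about $C$ a nonzero constant; and Cartan's Lemma \ref{lem:CartanSymTissus} concerns symmetries of a $3$-web, and an enlargement of $\mathfrak{aut}(\WW)$ does not by itself enlarge $\mathfrak{aut}(\Pi)$ (a web automorphism need not send geodesics to geodesics; the passage from web to projective structure is the content of the proof of Lemma \ref{lem:XgeodFlat}, not of Cartan's statement). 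The same Liouville computation is also what justifies $\gamma\neq0$ when $\delta=0$ in your case (ii.b), which you use silently when you ``normalize $C\equiv e^{-x}$''.

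A second, smaller gap: in the $\mathrm{sl}_2(\CC)$ case you assert that ``the same strategy extends the realization'' to the standard generators $\langle\partial_y,\partial_x+y\partial_y,y\partial_x+\frac{y^2}{2}\partial_y\rangle$. This is not automatic: solving the bracket relations over $X=\partial_y$, $Y=\partial_x+y\partial_y$ also produces the exotic realizations $Z=(y+c_1e^x)\partial_x+(\frac{y^2}{2}+c_2e^{2x})\partial_y$ recorded in the paper's remark, which are not conjugate to the standard one. One must prove that only the standard realization can preserve a projective structure; the paper gets this, together with the identification of the exceptional values $(\alpha,\beta)=(0,2),(0,\frac12)$ in (ii.a), from \cite[Lemma 4]{BMM}. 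Your plan should either cite that lemma at this point or carry out the corresponding determining-equation computation for the exotic family; as written, the reduction to case (iii) is asserted rather than proved.
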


\begin{remark}
The normal forms for $\mathfrak{aut}(\Pi)$ in the statement correspond to the list of transitive local actions 
of Lie algebras listed in (\ref{LieClassSymProjStr}), except that $\mathrm{sl}_2(\mathbb{C})$ has also exotic
representations generated by  
$$X=\partial_y,\ \ \ Y=\partial_x+y\partial_y\ \ \ \text{and}\ \ \ 
Z=(y+c_1e^x)\partial_x+(\frac{y^2}{2}+c_2e^{2x})\partial_y,\ \ \ c_1,c_2\in\CC.$$
Only the standard one occurs as symmetry algebra of a projective structure.
\end{remark}

\begin{remark}Case (iii) corresponds to the special structure 
$$\Pi_0\ :\ y'' = (x y' - y)^3$$
at the neighborhood of any point $p\not=(0,0)$ and is invariant under the standard action of $\mathrm{sl}_2(\mathbb{C})$
$$\mathfrak{aut}(\Pi_0)=\CC\langle x\partial_y,\frac{1}{2}(-x\partial_x+y\partial_y),-\frac{1}{2}y\partial_x\rangle$$ (see \cite[Theorem 3]{Rom}).
However, at $p=(0,0)$, the Lie algebra is singular, which is excluded from the list of Theorem \ref{thm:Bryant&al}.
Case (iv) corresponds to the linearizable case $y''=0$.
\end{remark}


\begin{proof}[Proof of Theorem \ref{thm:Bryant&al}]
We essentially follow \cite[section 2.3]{BMM}.
Let us start with the case $\mathfrak{aut}(\Pi)=\CC\langle X\rangle$. 
At a generic point $p\in U$, the vector field $X$ is regular and we can choose local coordinates such that $X=\partial_y$.
One easily deduce that the equation (\ref{projective-structure}) for the projective structure, being $X$-invariant, 
takes the form $y''=f(x,y,y')$ with
\begin{equation}\label{equation-with-one-vector-field}
f(x,y,z)= A(x) + B(x)z + C(x)z^2 + D(x)z^3.
\end{equation}
The normalizing coordinates for $X$ are unique up to a change of the form 
$$\Phi:(x,y)\mapsto(\psi(x),y+\phi(x)),\ \ \ \psi(0)=\phi(0)=0,\ \psi'(0)\not=0.$$
The projective structure $\Phi^*\Pi$ is defined by 
$$f(x,y,z)=\widehat{A}(x) + \widehat{B}(x)z +\widehat{C}(x)(z)^2 + \widehat{D}(x) (z)^3$$
where (we decompose for simplicity)
\begin{equation}\label{primeiro-cambio} 
\text{when}\ \Phi=(\psi(x),y),\ \text{then} 
\left\{\begin{matrix}
\widehat A=A\circ \psi\cdot(\psi')^2\hfill\\
\widehat B=B\circ \psi\cdot\psi'+\frac{\psi''}{\psi'}\\
\widehat C=C \circ \psi\hfill\\
\widehat D=\frac{D\circ \psi}{ \psi'}\hfill
\end{matrix}\right.
\end{equation}

\begin{equation}\label{segundo-cambio}  \text{when}\ \Phi=(x, y + \phi(x)),\ \text{then} 
\left\{\begin{matrix}
\widehat A=A+ B\phi'  + C(\phi')^2 +D (\phi')^3  - \phi''\\
\widehat B=B + 2C \phi'  + 3D (\phi')^2\hfill\\
\widehat C=C+ 3D\phi'\hfill\\
\widehat D=D\hfill
\end{matrix}\right.
\end{equation}
If $D\not\equiv 0$, then we can assume at a generic point that $D\neq 0$. We can normalize $\widehat{D}=1$ 
by setting $\psi^{-1}(x):=\int_0^x \frac{d\zeta}{D(\zeta)}$ in the first change,
and then normalize $\widehat{C}=0$ by setting $\phi'(x)=-C/3$ (which does not change $D=1$): $(A(x), B(x), 0,1)$.

Since we are interested in the Lie algebra, more than a given vector field, then we can also change 
$\Phi(x,y)=(x,ay)$ with $a\not=0$ and get the form
\begin{equation}\label{terceiro-cambio} 
\text{when}\ \Phi=(x,ay),\ \text{then}\ (\widehat A,\widehat B,\widehat C, \widehat D)=(a^{-1}A,\hspace{0.2cm} B,\hspace{0.2cm} aC,\hspace{0.2cm} a^2D).
\end{equation}
Finally, $\Phi=(a^2x,ay)$, a combination of (\ref{primeiro-cambio}) and (\ref{terceiro-cambio}), yields the new normal form 
$$(a^3A(a^2 x), a^2 B(a^2 x), 0, 1)$$
whence the $\CC^*$-action of the statement.

Suppose now $D\equiv 0$. If $C$ would be constant then, by (\ref{eq:Liouville}), $L_1 = L_2 =0$ and $\Pi$ is linearizable. 
Passing to a generic point, we can assume $C'(0)\neq 0$ and use changes (\ref{primeiro-cambio}) and (\ref{terceiro-cambio}) to normalize $C=e^x$. Finally by using (\ref{segundo-cambio}), we arrive in the unique desired normal form $(A(x),0, e^x,0)$. In this case the equation is never linearizable, since by (\ref{eq:Liouville}) we get $(L_1,L_2)=(0,2e^{2x})$.

Now we study the case $\mathfrak{aut}(\Pi)=\CC\langle X, Y\rangle$, with $[X,Y]=X$. By \cite[Lemma 1]{BMM}, 
we know that, at a generic point, we can find coordinates where $X=\partial_y$, $Y= \partial_x + y\partial_y$. 
The invariance of the projective structure by both the flows of $X$ and $Y$ yields
$$(A,B,C,D)=(\alpha e^x, \beta, \gamma e^{-x}, \delta e^{-2x}),$$
were $\alpha$, $\beta$, $\gamma$ and $\delta$ are constants. 
The normalizing coordinates for the Lie algebra are unique up to a change
$$\Phi=(x,ay+be^x)$$
with $a,b\in\CC$, $a\not=0$. 
If $\delta\neq 0$, we obtain a unique normal form $(\alpha e^x, \beta , 0, e^{-2x})$. 
By \cite[Lemma 4]{BMM}, the cases $(\alpha,\beta)=(0,2)$ and $(0,\frac{1}{2})$ have more symmetries:
they respectively correspond to the $\mathrm{sl}_3(\mathbb{C})$ and $\mathrm{sl}_2(\mathbb{C})$ cases.

When $\delta=0$, we shall have $\gamma\neq 0$ (otherwise $\Pi$ would be linearizable), 
and we can normalize $(\widehat A,\widehat B,\widehat C, \widehat D)=(\alpha e^x, 0, e^{-x}, 0)$, with $\alpha\in \mathbb{C}$;
this normal form is unique. 

The case $\mathfrak{aut}(\Pi)=\mathrm{sl}_2(\mathbb{C})$ follows directly from \cite[Lemma 4]{BMM}. 
\end{proof}

In Theorem \ref{thm:Bryant&al}, normal forms (i) contain some models with larger symmetry Lie algebra,
and we end the section by determining them. 

\subsection*{In the case  $\mathfrak{aut}(\Pi)=\mathrm{sl}_3(\mathbb{C})$} First of all, we have from \cite{Liouville} that the projective structure is linearizable when Liouville invariants $L_1$ and $L_2$ given by 
\begin{equation}\label{eq:Liouville}
\left\{\begin{matrix}
L_1=2B_{xy}-C_{xx} -3A_{yy}-6AD_x -3A_xD +3(AC)_y  +BC_x - 2BB_y,\\
L_2=2C_{xy}-B_{yy} -3D_{xx}+6A_yD +3AD_y  -3(BD)_x -B_yC + 2CC_x.
\end{matrix}\right.
\end{equation}
are identically zero, and we get:
$$ (L_1,L_2)=(-3A'(x),-3B'(x))\ \text{for model (i.a), and } $$
$$(L_1,L_2)=(-e^{-x},-2e^{-2x})\ \text{for model (i.b).}$$
Linearizability only occur in model (i.a) when $A$ and $B$ are simultaneously constant.

\subsection*{In the case $\mathfrak{aut}(\Pi)=\mathrm{aff}(\mathbb{C})$} There must exists a vector field $v\in\mathfrak{aut}(\Pi)$ such that
$$[\partial_y,v]=\partial_y\ \ \ \text{or}\ \ \ c\cdot v,\ \ \ c\in\CC.$$
This implies that $v$ takes the respective form
$$v=\alpha(x)\partial_x+(y+\beta(x))\partial_y\ \ \ \text{or}\ \ \ e^{cy}(\alpha(x)\partial_x+\beta(x)\partial_y).$$
We can furthermore assume $\alpha(0)\not=0$ so that the local action is transitive together with $\partial_y$;
moreover, $c\not=0$, otherwise the two vector fields commute, which is excluded in the non linearizable case.
Let us firstly discuss the case of normal form (i.a).
In the case where $\partial_y$ is stabilized by $\mathfrak{aut}(\Pi)$, by using \cite[formula (3)]{BMM} (a PDE system for a vector field to be a symmetry of a projective structure) 
for $v=\alpha\partial_x+(y+\beta)\partial_y$, 
one easily deduce that 
$$v=2(x+\alpha_0)\partial_x + (y+\beta_0)\partial_y\ \ \ \text{and}\ \ \ 
(A,B,C,D)=\left(\frac{\gamma_0}{(x+\alpha_0)^{3/2}},\frac{\delta_0}{(x+\alpha_0)},0,1\right),$$
with $\alpha_0,\beta_0,\gamma_0,\delta_0\in\CC$, $\alpha_0\not=0$.
The second case $v=e^{cy}(\alpha(x)\partial_x+\beta(x)\partial_y)$ is less explicit. The invariance of the projective structure in normal form (i.a) allows us to express everything in terms of $\alpha(x)$ and its derivatives:
$$\beta = \frac{\alpha'-c^2\alpha}{2c}\ \ \ \text{and}\ \ \ 
(A,B,C,D)=\left( \frac{\alpha'''-c^4\alpha'}{4c^3\alpha} , -\frac{3\alpha''+c^4\alpha}{4c^2\alpha} , 0 , 1 \right),$$
and finally yields the following differential equation for $\alpha$
$$c^4(\alpha\alpha''-(\alpha')^2)-3c^2(\alpha\alpha'''-\alpha'\alpha'')+2\alpha\alpha''''+\alpha'\alpha'''-3(\alpha'')^2=0.$$
Once we know the $3$-jet of $\alpha$, then we can deduce all the coefficients by mean of this differential equation.
Mind that we can set $\alpha(0)=1$ so that we get a $4$-parameter family of projective structures, taking into account 
the constant $c$, that can further be normalized to $c=1$ by using the $\CC^*$-action.
Equivalently, the family of projective structures is given by the solutions of the system of differential equations
$$A'=\frac{27cA^2+9AB'-3c(B+c^2)B'+c(4B+c^2)(B+c^2)^2}{6(B+c^2)}$$
$$B''=-\frac{27cA(cA-B')-12(B')^2-9c^2(B+c^2)B'+c^2(4B+c^2)(B+c^2)^2}{6(B+c^2)}$$
and we can recover $\alpha$ and $\beta$ by:
$$\frac{\alpha'}{\alpha}=-\frac{3cA+B'}{B+c^2}\ \ \ \text{and}\ \ \ \beta = \frac{\alpha'-c^2\alpha}{2c}.$$

For normal forms (i.b), the discussion is similar, easier though, and one find $v=-\partial_x+(y+c)\partial_y$, $c\in\CC$,
with projective structure $(\alpha_0e^{-x},0,e^x,0)$.

\subsection*{In the case $\mathfrak{aut}(\Pi)=\mathrm{sl}_2(\mathbb{C})$} We just note that $\partial_y$
must be contained in a $2$-dimensional affine Lie subalgebra, and we are in a particular case of the previous one.

\section{Symmetries of flat projective structures}\label{sec:SymFlat}

We say that a projective structure $\Pi$ defined on $U$ by \ref{projective-structure} is \textit{flat} (or \textit{foliated}) if the geodesics are tangent to a pencil of foliations $\{\mathcal{F}_z: \omega_z = \omega_0 + z \omega_{\infty}\}$, where $\omega_0$ and $\omega_{\infty}$ are $1$-forms on $U$ satisfying $ \omega_0 \wedge \omega_{\infty} \neq 0$. %

Here, we classify those projective structures having simultaneously a flat structure and Lie symmetries. In other words, we describe which models in the list of Theorem \ref{thm:Bryant&al} have a flat structure, and how many. As one can see on \cite[Section 3]{FL}, the flatness condition is equivalent to the existence of a semi-local fibration on the dual surface $(U^*, C_0)$ transverse to $C_0$. In \cite[Theorem 5.1]{FL} we show that a given projective structure,
if not linearizable, has at most $2$ flat structures, see also \cite[Theorem A]{FL1}.

\begin{thm}\label{thm:flat+LieSym}
Let $(U,\Pi)$ be a flat projective structure with Lie symmetries: $\mathfrak{aut}(\Pi)\not=\{0\}$. 
Then, at the neighborhood
of a generic point $p\in U$, the pair $(\Pi,\mathfrak{aut}(\Pi))$ and pencil of geodesic foliations 
$\mathcal F_z:\omega_0+z\omega_\infty$ 
can be reduced by local change of coordinate 
to one of the following normal forms:
\begin{itemize}
\item[(i)] $\mathfrak{aut}(\Pi)=\CC\cdot\partial_y$, $(A,B,C,D)=$
\begin{itemize}
\item[(i.a.1)] $(0,0,1+g',g)$ and $\mathcal F_z:\underbrace{e^y(dx+g(x)dy)}_{\omega_0}+z \underbrace{dy}_{\omega_\infty}$; 
\item[(i.a.2)] $(0,0,g',1)$ and $\mathcal F_z:\underbrace{-(dx+(g(x)+y)dy)}_{\omega_0}+z \underbrace{dy}_{\omega_\infty}$; 
\item[(i.b)] $(0,0,g',0)$ and $\mathcal F_z:\underbrace{dx+g(x)dy}_{\omega_0}+z \underbrace{dy}_{\omega_\infty}$;
\end{itemize}
\item[(ii)] $\mathfrak{aut}(\Pi)=\CC\langle\partial_y,\partial_x+y\partial_y\rangle$ and $(A,B,C,D)=$
\begin{itemize}
\item[(ii.a)] $(\alpha e^{x},\beta,0,e^{-2x})$ with $\alpha,\beta\in\CC$ belonging to the cubic nodal curve
\begin{equation}\label{eq:cubicnodalcurve}
\left\{\begin{matrix}
\CC&\to&\Gamma=\{27 \alpha^2+4\beta^3-12\beta^2+9\beta-2=0\}\subset\CC^2\\
\gamma&\mapsto&(\gamma(2\gamma^2-1),2-3\gamma^2)
\end{matrix}\right.
\end{equation}
and the corresponding flat structure is given by
$$\mathcal F_z:\underbrace{e^x(\gamma y+(2\gamma^2-1)e^x)dx-(y+2\gamma e^x)dy}_{\omega_0}+z \underbrace{(dy-\gamma e^xdx)}_{\omega_\infty}.$$
Here, we exclude the cases $\alpha=0$ 
corresponding to (iii) and (iv) below.
\item[(ii.b.1)] $\left(\frac{1-\lambda^2}{4}e^x,0,e^{-x},0\right)$ with $\lambda\in\CC^*$, and 
$$\mathcal F_z:\underbrace{e^{\lambda x}\left[dy-\left(\frac{1-\lambda}{2}\right)e^xdx\right]}_{\omega_0}+z \underbrace{\left[dy-\left(\frac{1+\lambda}{2}\right)e^xdx\right]}_{\omega_\infty};$$
\item[(ii.b.2)] $(\frac{e^x}{4},0,e^{-x},0)$ and $\mathcal F_z:\underbrace{(1-\frac{x}{2})e^xdx+xdy}_{\omega_0}+z \underbrace{(dy-\frac{1}{2}e^xdx)}_{\omega_\infty}$;
\end{itemize}
\item[(iii)] $\mathfrak{aut}(\Pi)=\CC\langle\partial_y,\partial_x+y\partial_y,y\partial_x+\frac{y^2}{2}\partial_y\rangle$
and $(A,B,C,D)=(0,\frac{1}{2},0,e^{-2x})$;
\item[(iv)] $\mathfrak{aut}(\Pi)=\mathrm{sl}_3(\mathbb{C})$ and $(A,B,C,D)=(0,0,0,0)$.
\end{itemize}
\end{thm}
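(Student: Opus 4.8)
The plan is to run through the normal forms supplied by Theorem~\ref{thm:Bryant&al} and, for each of them, decide whether a flat structure exists and exhibit every such pencil. On the dual side flatness means the existence of a semi-local fibration transverse to $C_0$ (see \cite[Section 3]{FL}), and by \cite[Theorem 5.1]{FL} a non-linearizable structure carries at most two of them. Since the flow of any $X\in\mathfrak{aut}(\Pi)\simeq\mathfrak{aut}(U^*,C_0)$ is connected, it must fix each of these finitely many fibrations; equivalently, the geodesic pencil $\mathcal{F}_z:\omega_0+z\omega_\infty$ is invariant under the flow of every symmetry, which then acts on the parameter $z$ through a one-parameter subgroup of $\mathrm{PGL}_2(\CC)$. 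Because each normal form below comes equipped with its pencil, flatness is automatic; the content is the converse, i.e. that these are the only invariant pencils, which is precisely what the invariance analysis delivers.

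First I would treat case (i), where $\mathfrak{aut}(\Pi)=\CC\cdot\partial_y$. Writing $\partial_z\omega_z=\omega_\infty$, invariance reads $\mathcal{L}_{\partial_y}\omega_z=a(z)\omega_z+b(z)\omega_\infty$, so the induced field $b(z)\partial_z$ on the pencil line is a one-parameter subgroup of $\mathrm{PGL}_2(\CC)$, to be classified by conjugacy type. The semisimple type (two fixed members) normalizes, after rectifying $X$ to $\partial_y$ and using the residual changes $\Phi=(\psi(x),y+\phi(x))$ from the proof of Theorem~\ref{thm:Bryant&al}, to $\omega_\infty=dy$ and $\omega_0=e^y(dx+g(x)dy)$, producing (i.a.1); the unipotent type (one fixed member) gives $\omega_\infty=dy$, $\mathcal{L}_{\partial_y}\omega_0=-\omega_\infty$ and hence $\omega_0=-(dx+(y+g(x))dy)$, producing (i.a.2); in both subcases $D\not\equiv0$, matching the regular $\check{X}$ of Lemma~\ref{lem:XdualSingular}. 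The trivial type, where every member is $\partial_y$-invariant, forces $y$-independent coefficients; substituting the inverse Möbius relation $z=Z(x,y')$ into the slope derivative then shows $f$ is quadratic in $y'$, i.e. $D\equiv0$, so here Lemma~\ref{lem:XdualSingular} already furnishes the flat fibration $\mathcal{F}_{\check{X}}$, Lemma~\ref{lem:XgeodFlat} rules out its singular curve being a fiber in the non-linearizable range, and the normal form reduces to $(0,0,g'(x),0)$, which is (i.b). In each subcase the residual modulus is the single analytic function $g$, and $(A,B,C,D)$ is read off by differentiating $y'=-(a_0+za_\infty)/(b_0+zb_\infty)$ along the leaves.

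Next I would impose invariance under the whole algebra $\mathrm{aff}(\CC)=\CC\langle\partial_y,\partial_x+y\partial_y\rangle$ in case (ii), which is far more rigid since the pencil must be simultaneously invariant under both flows. For $D=e^{-2x}\neq0$ the $\partial_y$-action turns out to be unipotent, and adding invariance under $\partial_x+y\partial_y$ converts existence of the pencil into an algebraic compatibility condition on $(\alpha,\beta)$; solving it yields exactly the cubic nodal curve \eqref{eq:cubicnodalcurve}, whose rational parametrization by $\gamma$ then delivers the explicit pencil of (ii.a). For $D\equiv0$, $C=e^{-x}$, the invariance equations integrate to characteristic exponents $\tfrac{1\pm\lambda}{2}$ and a pencil carrying the factor $e^{\lambda x}$, giving (ii.b.1) for $\lambda\in\CC^*$; the resonant value $\lambda=0$, where the two exponents coincide, is where the exponential degenerates into the logarithmic/polynomial form, producing (ii.b.2). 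Finally, the point $(\alpha,\beta)=(0,\tfrac12)$ is the node of \eqref{eq:cubicnodalcurve}, reached by $\gamma=\pm\tfrac{1}{\sqrt{2}}$, and its two branches are the two flat structures of the $\mathrm{sl}_2(\CC)$-structure (iii); the smooth point $(0,2)$ is linearizable and absorbed into the $\mathrm{sl}_3(\CC)$ case (iv), which is flat with a whole family of pencils.

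The main obstacle I anticipate is the computation in case (ii.a): extracting the exact equation $27\alpha^2+4\beta^3-12\beta^2+9\beta-2=0$ from the joint invariance conditions, and matching it with the one-parameter family of pencils through $(\alpha,\beta)=(\gamma(2\gamma^2-1),2-3\gamma^2)$, is the only genuinely heavy calculation; everything else is bookkeeping. The subtler conceptual points are the degenerate strata---the unipotent type in (i.a.2), the resonance $\lambda=0$ in (ii.b.2), and the node in (iii)---where the generic normalizations break down and must be replaced by their logarithmic or coalescing limits, and where one must invoke the bound of \cite[Theorem 5.1]{FL} to be certain that no flat structure has been missed.
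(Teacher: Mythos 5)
Your overall route coincides with the paper's: use \cite[Theorem 5.1]{FL} plus connectedness of the flow to force invariance of the pencil, classify the induced action on the parameter line $\mathbb{P}^1_z$ up to conjugacy (semisimple/unipotent/trivial in case (i), the four pairs in case (ii)), integrate each type to a normal form for $(\omega_0,\omega_\infty)$, and recover (iii) and (iv) as the points $\gamma=\pm\tfrac{1}{\sqrt{2}}$ and $\gamma=0$ of the cubic. The genuine gap is the step you treat as free of charge: the normalization $\omega_\infty=dy$. An $X$-invariant foliation can be brought to $dy=0$ by a change $(x,y)\mapsto(x,y+\phi(x))$ only if it is \emph{not} the foliation $\mathcal{F}_X:\{dx=0\}$ swept out by the orbits of $X=\partial_y$. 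When $D\equiv0$ the vertical lines are geodesics, so $\mathcal{F}_X$ is a perfectly good candidate member of the pencil, and being $X$-invariant it would automatically be one of the fixed members you propose to normalize. Excluding this is exactly the third assertion of the paper's Lemma \ref{lem:XflatNonLin} (for non-linearizable $\Pi$, no member of the pencil coincides with $\mathcal{F}_X$), whose proof is the delicate part of the section: on the dual surface, $\Gamma=\mathrm{sing}(\check{X})$ would be a fiber of the fibration $\mathcal{H}$, and one concludes either by Lemma \ref{lem:XgeodFlat} (when $\Gamma$ is $\mathcal{F}_{\check{X}}$-invariant) or by an invariance argument on the tangency locus $\mathrm{tang}(\mathcal{F}_{\check{X}},\mathcal{H})$; the paper invokes this lemma again in case (ii) to justify $\omega_\infty=d(y-\gamma e^x)$.

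Concretely, the omission bites in sub-cases your proposal never sees. In the semisimple case of (i) a vertical member can only be the second fixed foliation $\mathcal{F}_0$, which your normal form absorbs as $g\equiv0$ (linearizable, hence excluded); and in the trivial case your appeal to Lemma \ref{lem:XgeodFlat} is sound, since there $\mathcal{H}=\mathcal{F}_{\check{X}}$ and a vertical member means precisely that $\Gamma$ is a fiber. But in the unipotent case of (i) the \emph{unique} fixed member could itself be $\mathcal{F}_X$, and likewise in (ii.b) where $D\equiv0$; in those situations your derivations of $\omega_\infty=dy$, resp. $\omega_\infty=d(y-\gamma e^x)$, have no justification, and the circular remark ``in both subcases $D\not\equiv0$'' (read off from the normal forms obtained \emph{after} the normalization) cannot supply one. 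These sub-cases produce no new normal forms --- for instance a unipotent $X$-action fixing $\mathcal{F}_X$ forces $\omega_0=(f(x)-y)dx+g(x)dy$, whose projective structure has $C\equiv D\equiv0$ with $A,B$ depending on $x$ only, hence $L_1=L_2=0$ by (\ref{eq:Liouville}) and $\Pi$ is linearizable --- but either such a computation or the dual-surface argument of Lemma \ref{lem:XflatNonLin} must be supplied; as written, your case analysis is incomplete exactly where the paper's is hardest.
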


Case (iii) corresponds to the case (ii.a) with $\gamma=\pm\frac{1}{\sqrt{2}}$; 
the two values of $\gamma$ provide the two flat structures for $\Pi$ in this case.
Case (iv) corresponds to the case (ii.a) with $\gamma=0$; in that case, all 
flat structures are described in \cite[Example 3.3]{FL}.

\begin{lemma}\label{lem:XflatNonLin}
Let $(U,\Pi)$ be a projective structure with Lie symmetry $X=\partial_y$ and flat structure $\mathcal F_{\omega_z}$,
with $\omega_z=\omega_0+z\omega_\infty$. If $(U,\Pi)$ is not linearizable, then
\begin{itemize}
\item the flow $\phi_X^t$ of $X$ must preserve the flat structure,
\item the flow $\phi_X^t$ must preserve at least one foliation of the pencil, say $\mathcal F_{\omega_\infty}$,
\item no element of the pencil $\mathcal F_{\omega_z}$ can coincide with the foliation $\mathcal F_X:\{dx=0\}$,
and after change of coordinate $y\mapsto y+\phi(x)$, we may furthermore assume $\omega_\infty=dy$.
\end{itemize}
In particular, at the neighborhood of a generic point $p\in U$, 
we may furthermore assume $\omega_\infty=dy$ in convenient coordinates.
\end{lemma}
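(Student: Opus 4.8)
The plan is to establish the three bullet points in order and then to deduce the final normalization $\omega_\infty=dy$; the first two points follow from rigidity of flat structures and from the induced action on the pencil, while the third is a short Riccati computation. First I would use the crucial input recalled in the introduction: since $(U,\Pi)$ is not linearizable, it carries at most two flat structures (\cite[Theorem 5.1]{FL}). As $X$ is an infinitesimal symmetry, each time-$t$ map $\phi_X^t$ preserves $\Pi$ and hence sends flat structures of $\Pi$ to flat structures of $\Pi$; thus the flow acts on this finite set. The assignment $t\mapsto(\phi_X^t)_*(\text{flat structure})$ is analytic in $t$, equals the given structure at $t=0$, and takes values in a finite, hence discrete, set, so it is constant. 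This gives the first bullet. Consequently $\phi_X^t$ permutes the foliations of the pencil $\{\mathcal F_{\omega_z}\}_{z\in\mathbb P^1}$, inducing a one-parameter group $z\mapsto\sigma_t(z)$ of the parameter line. Since $\omega_z=\omega_0+z\omega_\infty$ is linear in $z$ and $\phi_X^t$ acts projectively on tangent directions, $\sigma_t$ is a one-parameter group of M\"obius transformations; its generator is a holomorphic vector field on $\mathbb P^1$, i.e. a section of $T\mathbb P^1\simeq\mathcal O_{\mathbb P^1}(2)$, which necessarily vanishes somewhere. A zero is a foliation fixed by the whole flow, which I relabel $\mathcal F_{\omega_\infty}$, giving the second bullet.

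For the third bullet I argue by contradiction: suppose some member of the pencil is $\mathcal F_X=\{dx=0\}$. Taking this member together with any other (necessarily non-vertical) member as generators, I reparametrize the pencil so that its finite-slope leaves carry slope $s_z=-(p(x,y)+z)$, the vertical foliation $\{dx=0\}$ occurring at $z=\infty$. Flatness says precisely that each slope field $s=s_z$ solves the Riccati-type equation $s_x+s\,s_y=A+Bs+Cs^2+Ds^3$ associated with (\ref{eq:normform1vectfield}). Substituting $s_z=-(p+z)$ and viewing the resulting identity as a polynomial identity in the indeterminate $w=p+z$, the left-hand side $-p_x+w\,p_y$ is affine in $w$ while the right-hand side $A-Bw+Cw^2-Dw^3$ is cubic; matching the cubic and quadratic coefficients forces $D\equiv0$ and $C\equiv0$. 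But then, since $A,B,C,D$ depend on $x$ only, the Liouville invariants (\ref{eq:Liouville}) vanish identically and $\Pi$ is linearizable, contrary to hypothesis. Hence no member of the pencil is $\{dx=0\}$.

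It then remains to normalize $\omega_\infty=dy$. By the second bullet $\mathcal F_{\omega_\infty}$ is invariant under the flow of $X=\partial_y$, so its direction field is invariant under $y\mapsto y+t$ and therefore depends on $x$ alone; as it is not vertical by the third bullet, $\mathcal F_{\omega_\infty}=\{dy-m(x)dx=0\}$ for some $m$. The change $y\mapsto y+\phi(x)$ with $\phi'=-m$, which is of type (\ref{segundo-cambio}) and hence preserves both $X=\partial_y$ and the normal form (\ref{eq:normform1vectfield}), straightens $\mathcal F_{\omega_\infty}$ to $\{dy=0\}$, so that $\omega_\infty=dy$. Performing all of this at a generic point, where $X$ is regular and rectified to $\partial_y$, yields the final assertion.

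I expect the subtle step to be the first bullet: one must ensure that the flat structures genuinely form a finite, rigid set, so that the connected orbit of the flow collapses to a single point. This is exactly where non-linearizability is indispensable, a linearizable $\Pi$ having a positive-dimensional family of flat structures for which the rigidity argument breaks down. By contrast, the Riccati computation of the third bullet, although the technical core, is routine once the pencil is put in the form $s_z=-(p+z)$.
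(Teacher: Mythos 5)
Your first two bullets and the final normalization are correct and essentially reproduce the paper's own argument, just transported from the dual surface to $U$: the paper also invokes \cite[Theorem 5.1]{FL} to rule out a non-constant family of flat structures, and then gets the invariant member $\mathcal F_{\omega_\infty}$ as a zero of the induced vector field on the leaf space $\simeq\mathbb P^1_z$. The genuine gap is in the third bullet, which is in fact where all the work lies. If the vertical foliation $\{dx=0\}$ is the member at $z=\infty$, then $\omega_\infty=u\,dx$ and $\omega_0=a\,dx+b\,dy$ with $u,b$ non-vanishing, so the slope of the member $z$ is $s_z=-(P+zQ)$ with $P=a/b$ and $Q=u/b$. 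The only remaining freedom, an affine reparametrization of $z$ fixing $\infty$, rescales $Q$ by a \emph{constant}; you cannot normalize $Q\equiv 1$ unless $Q$ is already constant, and nothing guarantees that (rescaling the generating forms by functions is not allowed: it changes the pencil as a family of foliations). Redoing your coefficient matching with the correct ansatz, the $z^3$-coefficient still gives $D\equiv 0$, but the $z^2$-coefficient gives $QQ_y=CQ^2$, i.e. $Q_y=CQ$, not $C\equiv 0$. Your contradiction therefore evaporates.

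This is not a repairable bookkeeping slip: the intermediate claim itself is false. For the structure $y''=(y')^2$, i.e. $(A,B,C,D)=(0,0,1,0)$, which is $\partial_y$-symmetric and linearizable, the pencil $\omega_z=e^{-y}dy+z\,dx$ consists entirely of geodesic foliations (its members satisfy $y'=-ze^{y}$, hence $y''=z^2e^{2y}=(y')^2$), contains $\{dx=0\}$ at $z=\infty$, and has $C\equiv 1\neq 0$; here $Q=e^{y}$ and indeed $Q_y=CQ$. So coefficient matching of the top two powers of $z$ can never force $C\equiv0$, and the non-linearizability hypothesis must enter in a more essential way. That is precisely what the paper's proof supplies: it passes to the dual surface, observes via Lemma \ref{lem:XdualSingular} (case $D\equiv0$) that $\check X$ acquires a curve $\Gamma$ of singular points, and excludes the configuration using Lemma \ref{lem:XgeodFlat} (which rests on Cartan's classification of $3$-webs with symmetry) together with a tangency-locus argument between the fibrations $\mathcal F_{\check X}$ and $\mathcal H$. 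If you want to keep a purely computational route, you would have to retain the unknown factor $Q$, exploit all four matching equations together with the fact that $A,B,C$ depend on $x$ alone, and prove that $C$ must be \emph{constant}; only then do the Liouville invariants of (\ref{eq:Liouville}), which here reduce to $(L_1,L_2)=(-C''+BC',\,2CC')$, vanish and yield the contradiction. Note also that your closing assessment is backwards: the first bullet is the easy rigidity step, while the third bullet is the delicate one.
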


\begin{proof}The vector field induces an action on geodesics, and therefore on the dual space $(U^*,C_0)$;
denote by $\check{X}$ the infinitesimal generator. Let $\mathcal H$ be the transverse fibration corresponding
to the flat structure. If $\mathcal H$ is not invariant by the flow $\phi^t_{\check{X}}$,
then we obtain a $1$-parameter family $\mathcal H_t=(\phi^t_{\check{X}})_*\mathcal H$
and deduce from \cite[Theorem 5.1]{FL} that $\Pi$ is linearizable, contradiction. Therefore,
$\check{X}$ preserves $\mathcal H$ and acts on the space of leaves $\simeq\mathbb P^1_z$.
In particular, it has a fixed point, corresponding to an $X$-invariant foliation in the pencil, 
say $\mathcal F_{\omega_\infty}$.

Assume for contradiction that the foliation $\mathcal F_X$, defined by $dx=0$, 
coincides with one of the $\mathcal F_{\omega_z}$'s; since it is $X$-invariant,
we can assume $z=\infty$.
Therefore, we are in the third case of Lemma \ref{lem:XdualSingular}: 
$\check{X}$ has a curve $\Gamma\subset U^*$ of singular points transversal to $C_0$.
Moreover, $\Gamma$ is $\mathcal H$-invariant and $\check{X}$ defines another transverse
fibration $\mathcal F_{\check{X}}$.
If $\Gamma$ is invariant by $\mathcal F_{\check{X}}$, then Lemma \ref{lem:XgeodFlat}
implies that $(U,\Pi)$ is linearizable, contradiction. Therefore, $\Gamma$ is not 
invariant by $\mathcal F_{\check{X}}$, and in particular, the fibrations $\mathcal F_{\check{X}}$ 
and $\mathcal H$ do not coincide. Consider the tangency set $\Sigma:=\mathrm{tang}(\mathcal F_{\check{X}},\mathcal H)$.
Since $\mathcal H$ is $\check{X}$-invariant, $\Sigma$ must be  $\check{X}$-invariant.
Clearly, $\Sigma$ is not contained in the singular set $\mathrm{sing}(\check{X})=\Gamma$ 
and $\Sigma$ is thus $\mathcal F_{\check{X}}$-invariant. Again, this means that $\Sigma$ is a common 
fiber of $\mathcal F_{\check{X}}$ and $\mathcal H$, and the proof of Lemma \ref{lem:XgeodFlat}
implies that $(U,\Pi)$ is linearizable (see also \cite[Theorem 5.3]{FL}), contradiction.
\end{proof}

\begin{proof}[Proof of Theorem \ref{thm:flat+LieSym}]
Let us start with the case $\mathfrak{aut}(\Pi)=\CC\cdot X$ with $X=\partial_y$, and assume $(U,\Pi)$ not linearizable. 
Then, by Lemma \ref{lem:XflatNonLin}, $X$ preserves the pencil of foliations and acts on the parameter space $z\in\mathbb P^1$
fixing $z=\infty$. More precisely, we can assume $\omega_\infty=dy$ and that the action on the pencil is induced by one of the following vector fields
$$z\partial_z,\ \ \ \partial_z\ \ \ \text{or}\ \ \ 0.$$
In the first case, we must have that $(\phi^t_X)_*\omega_z$ is proportional to $\omega_{e^tz}$ for any $t,z\in\CC$; 
since $\omega_z=\omega_0+z\omega_\infty$ and $\omega_\infty$ is $\phi^t_X$-invariant, we deduce
$$(\phi^t_X)_*\omega_0=e^{-t}\omega_0.$$
This implies that $\omega_0=e^{y}(f(x)dx+g(x)dy)$ for some functions $f,g\in\mathbb C\{x\}$, $f(0)\not=0$.
After taking $\int f(x)dx$ as a new coordinate, we get the normal form
$$\omega_0=e^{y}(dx+g(x)dy).$$
We easily derive the projective structure $\Pi$ by derivating ``$\omega_0/\omega_{\infty}$'':
$$0=\left(e^y\left(\frac{1}{y'}+g\right)\right)'=e^yy'\left(\frac{1}{y'}+g\right)+e^y\left(-\frac{y''}{(y')^2}+g'\right)$$
$$\Rightarrow y''=(1+g')(y')^2+(g)(y')^3,\ \ \text{i.e.}\ (A,B,C,D)=(0,0,1+g',g).$$
If the action is now induced by $\partial_z$, then we get 
$$(\phi^t_X)_*\omega_0=\omega_0+t\omega_\infty$$
which gives $\omega_0=f(x)dx+(g(x)-y)dy$, where again we can normalize $f\equiv1$ which gives 
the projective structure 
$$\omega_0=-(dx+(g(x)+y)dy\ \ \ \text{and}\ \ \ (A,B,C,D)=(0,0,g',1).$$
Finally, when the action is trivial on the parameter space $z$, we get that $\omega_0$ is also invariant,
i.e. of the form $f(x)dx+g(x)dy$; we can again normalize $f\equiv1$ and get
$$\omega_0=dx+g(x)dy\ \ \ \text{and}\ \ \ (A,B,C,D)=(0,0,g',0).$$

Let us now consider the case $\mathfrak{aut}(\Pi)=\CC\langle X,Y\rangle$ with $X=\partial_y$ and $Y=\partial_x+y\partial_y$,
and still assume $(U,\Pi)$ not linearizable. Like before, the Lie algebra preserves the pencil $\mathcal F_z$ and induces
an action on the parameter space of the form
$$(X,Y)\vert_z=(\partial_z,z\partial_z),\ \ \ (0,\lambda z\partial_z),\ \ \ (0,\partial_z),\ \ \ \text{or}\ \ \ (0,0),$$
with $\lambda\in\CC^*$. We note that we cannot normalize $\lambda=1$ by homothecy since $Y$ has to 
satisfy $[X,Y]=X$ in the $(x,y)$-variables; different values of $\lambda$ will correspond to different projective structures.
 
In any case for $(X,Y)\vert_z$, $\mathcal F_\infty$ is fixed, and this means that we can write
$$\omega_\infty=d(y-\gamma e^x)$$
for some $\gamma\in\mathbb C$. Indeed, the invariance by $X$ means that the leaves of $\mathcal F_\infty$ 
are $\partial_y$-translates of the leaf $y=f(x)$ passing through the origin, i.e. we can choose $\omega_\infty=d(y-f(x))$;
then, the invariance by $Y$ gives the special form $f(x)=\gamma e^x$.
Here, we have used Lemma \ref{lem:XflatNonLin} to insure that, maybe passing to a generic point $p\in U$, 
we can assume that $\mathcal F_\infty$ is not vertical at $p$. 

If $(X,Y)\vert_z=(\partial_z,z\partial_z)$, then we can check that 
$$\omega_0=(\alpha e^{2x}+\gamma e^xy)dx+(\beta e^x-y)dy$$
for some constants $\alpha,\beta\in\CC$. This normalization is unique up to 
a change of coordinate of the form $\Phi=(x,ay+be^x)$ preserving the Lie algebra;
this allow to reduce the corresponding projective structure $\Pi$ into the normal form (ii.a)
of Theorem \ref{thm:Bryant&al}, yielding after straightforward computation the formulae 
(ii.a) of Theorem \ref{thm:flat+LieSym}.

If $(X,Y)\vert_z=(0,\lambda z\partial_z)$, then we find
$$\omega_0=e^{\lambda x}\left(\alpha e^{x}dx+\beta e^xdy\right)$$
which gives after normalization
$$\omega_z=e^{\lambda x}\left[dy-\left(\frac{1-\lambda}{2}\right)e^xdx\right]+z\left[dy-\left(\frac{1+\lambda}{2}\right)e^xdx\right]$$
$$\text{and}\ \ \ (A,B,C,D)=\left(\frac{1-\lambda^2}{4}e^x,0,e^{-x},0\right).$$
If $(X,Y)\vert_z=(0,\partial_z)$, then we find
$$\omega_0=(\alpha+\gamma x) e^{x}dx+(\beta -x)dy$$
which gives after normalization
$$\omega_z=(1-\frac{x}{2})e^xdx+xdy+z(dy-\frac{1}{2}e^x)\ \ \ \text{and}\ \ \ (A,B,C,D)=\left(\frac{e^x}{4},0,e^{-x},0\right).$$
Finally, if $(X,Y)\vert_z=(0,0)$, then we find
$$\omega_0=\alpha e^{x}dx+\beta dy$$
which gives after normalization $(A,B,C,D)=(0,1,0,0)$, which is linearizable.
\end{proof}

\begin{remark}Projective structures of Theorem \ref{thm:flat+LieSym} (i) can be put in normal form as in Theorem \ref{thm:Bryant&al}. For instance, in the case (i.a.1), using change(\ref{primeiro-cambio}) , one easily get the following form
$$\omega_z=e^{f(x)y}(dx+dy)+zdy\ \ \ \text{and}\ \ \ (A,B,C,D)=(0,f',1+f',1).$$
The final normalization (\ref{segundo-cambio}) is not so explicit, but turning the other way round, 
we can easily check that a normalized projective structure $(A,B,0,1)$ comes from such a flat structure 
iff it satisfies
$$A^2=-\frac{(4B^2+5B-3B'+1)^2}{3(4B+1)}$$
and in this case, $f(x)$ (and the flat structure) is given by 
$$f'=\frac{1}{2}+\frac{1}{2}\sqrt{-3(4B+1)}.$$
In a very similar way, the projective structure $(A,B,0,1)$ comes from the case (i.a.2) iff
$$A^2=-\frac{(4B^2-3B')^2}{108B}$$
and in this case, $g(x)$ (and the flat structure) is given by 
$$g'=\sqrt{-3B}.$$
Finally, one easily check by similar computations that any normal form (i.b) of Theorem \ref{thm:Bryant&al},
i.e. $(A(x),0,e^x,0)$, is also flat, i.e. comes from (i.b) of Theorem \ref{thm:flat+LieSym}.
\end{remark}

\end{document}